\theoremstyle{plain}
\newtheorem{X}{X}[section]
\newtheorem{theorem}[X]{Theorem}
\newtheorem{proposition}[X]{Proposition}
\newtheorem{lemma}[X]{Lemma}
\newtheorem{corollary}[X]{Corollary}
\theoremstyle{definition}
\newtheorem{definition}[X]{Definition}
\newtheorem{remark}[X]{Remark}
\newtheorem{question}[X]{Question}
\begin{document}

\title{Higher Euler characteristics:\\ \emph{variations on a theme of Euler}}
\date{\today}
\address{Department of Mathematics, University of Maryland, College Park, MD 20742 USA.} 
\email{atma@math.umd.edu}
\author{Niranjan Ramachandran}
\urladdr{http://www2.math.umd.edu/~atma/}
\thanks{Partly supported by the 2015-2016 ``Research and Scholarship Award''  from the
Graduate School, University of Maryland.}
%Partly supported by Graduate Research Board (UMD)}
%\address{} 

%\documentclass[12pt,reqno]{amsart}
%\begin{document}
\begin{abstract}
We provide a natural interpretation of the secondary Euler characteristic and introduce higher Euler characteristics. For a compact oriented manifold of odd dimension, the secondary Euler characteristic recovers the Kervaire semi-characteristic. We prove basic properties of the higher invariants and illustrate their use.  We also introduce motivic variants.
\end{abstract}
\maketitle
\begin{quote}
``{\it Being trivial is our most dreaded pitfall.    ...

``Trivial'' is relative. Anything grasped as long as two minutes ago seems trivial to a working mathematician.}''
\end{quote}
\begin{flushright} -----\href{http://www.ihes.fr/~gromov/PDF/autobiography-dec20-2010.pdf}{M. Gromov, {\it A few recollections}, 2011.}
\end{flushright}
The characteristic introduced by L.~Euler \cite{euler, euler3, euler2}, (first mentioned in a  letter to C.~Goldbach dated 14 November 1750) via his celebrated formula $$V - E + F =2,$$ is a basic ubiquitous invariant of topological spaces; two extracts from the letter:
\begin{quote} ...Folgende Proposition aber kann ich nicht recht rigorose demonstriren $$\cdots H+S = A+2.$$ 

...Es nimmt mich Wunder, dass diese allgemeinen proprietates in der Stereometrie noch von Niemand, so viel mir bekannt, sind angemerkt worden; doch viel mehr aber, dass die f\"urnehmsten davon als theor. 6 et theor. 11 so schwer zu beweisen sind, den ich kann dieselben noch nicht so beweisen, dass ich damit zufrieden bin....
\end{quote} 
When the Euler characteristic of a topological space vanishes, then it becomes necessary to introduce other invariants to study it. For instance, an odd-dimensional compact oriented manifold has zero Euler characteristic; an important invariant of such manifolds is the semi-characteristic of M. Kervaire \cite{kervaire}.

In recent years, the ``secondary" or ``derived" Euler characteristic $\chi'$ has made its appearance in many disparate fields  \cite{bismutz, farber, grayson, bunkeo, anton, lichtenbaum, bloch}; in fact, this secondary invariant dates back to 1848 when it was introduced by A.~Cayley (in a paper ``\emph{A Theory of elimination}", see \cite[Corollary 15 on p.~486 and p.~500, Appendix B]{MR2394437}).   

In this short paper, we provide a natural interpretation and generalizations of the ``secondary" Euler characteristic. Our initial aim was to understand the appearance of the ``secondary" Euler characteristic in formulas for special values of zeta functions \cite{lichtenbaum} (see \S\ref{zeta}).  We introduce invariants $\chi_j$ for $j \ge 0$ which generalize $\chi$ and $\chi'$; one has $\chi_0 = \chi$ and $\chi_1 = \chi'$; further, we prove (Corollary \ref{natural}) that $\chi_j$ is the $j^{\text{\tiny th}}$ coefficient of the Taylor expansion of the Poincar\'e polynomial $P(t)$ at $t =-1$. This interpretation seems new in the literature.

As motivation for higher Euler characteristics, consider the following questions:
\begin{itemize} 
\item  {\bf Q1} Given a compact manifold $M$ of the form $$M = N \times \underbrace{S^1 \times S^1 \cdots S^1}_{\text{r factors}}, \qquad r>0$$ which  topological invariant detects the integer $r>0$? The Euler characteristic of $M$ is always zero: $\chi(M) = \chi(N). \chi(S^1)^r =0$ independent of $r$. A related question: given $M$, how to compute the Euler characteristic of $N$?

\item {\bf Q2} For a commutative ring $A$, write $K_0(A)$ for the Grothendieck group of the (exact) category $\text{Mod}_A$ of finitely generated projective $A$-modules.  Any bounded complex $C$ of finitely generated projective $A$-modules defines a class $[C] \in K_0(A)$. As the class $[C]$ of an acyclic complex $C$ is zero, one can ask:  Are there natural non-trivial invariants  of acyclic complexes $C$? Are there enough to help distinguish an acyclic complex from a tensor product (itself acyclic) of acyclic complexes? 
\end{itemize} 
The higher Euler characteristics answer these questions; these invariants are ``special values" of the Poincar\'e polynomial; see Remark \ref{analogy-zeta}. We show (Lemma \ref{ker}) that the secondary Euler characteristic recovers the semi-characteristic of M.~Kervaire \cite{kervaire}. The topological and the K-theoretic versions of the higher Euler characteristics are in the first and third section. The last section indicates certain generalizations in the context of motivic measures and raises related questions. The second section is a gallery of secondary Euler characteristics.  

Note the analogy between taking a product with a circle $X \mapsto X \times S^1$  and taking the cone $CN$ of a self-map $N \to N$ (compare part (iii) of Theorems \ref{main1} and \ref{main2}).  J.~Rosenberg alerted us to a definition of ``higher Euler characteristics" due to R.~Geoghegan and A.~Nicas \cite{MR1341940}; the relations with this paper will be explored in future work.

\noindent {\bf Notations.} A nice topological space is, or is homotopy equivalent to, a finite CW complex.

\section{Topological setting}\label{topos}

\subsection*{Introduction.}  Recall that, for any nice topological space $M$, its Euler characteristic $$\chi(M) = \sum_i (-1)^i b_i(M)$$ is the alternating sum of the Betti numbers $b_i = b_i(M) = {\rm rank}_{\mathbb Z}~H_i(M, \mathbb Z)$.  The ``secondary" Euler characteristic of $M$ is defined as $$\chi'(M) = \sum_{i}(-1)^{i-1} i b_i = b_1 - 2b_2 +3b_3 - \cdots.$$
The topological invariant $\chi$ satisfies (and is characterized by) the following properties: it is invariant under homotopy,  $\chi(\text{point}) =1$, and, for nice spaces $U$ and $V$, \begin{align}\label{euler} \chi( U \times V) &= \chi(U). \chi(V),\nonumber \\ 
\chi(U\cup V) &=  \chi(U) + \chi(V) - \chi(U\cap V). \end{align} Clearly, $\chi'$ cannot satisfy the same properties. One has that
\begin{itemize}
\item  $\chi'$ is invariant under homotopy, 
\item $\chi'(\text{point}) =0$ 
\item but, in general, $\chi'(U\times V) \neq \chi'(U).\chi'(V)$ and 
\item $\chi'$ satisfies (\ref{euler}) only for disjoint unions. 
\end{itemize} 

As $\chi(M) =0$ (Poincar\'e duality) for any oriented compact closed manifold $M$ of odd dimension,  $\chi'(M)$ is the simplest nontrivial natural topological invariant for such manifolds.  

\begin{lemma}\label{lem} Let $M$ and $N$ be nice topological spaces.

(i)  $\chi'(M \times S^1) = \chi(M)$.

(ii) $\chi'(M \times N) = \chi(M) \chi'(N) + \chi(N). \chi'(M)$.

\end{lemma}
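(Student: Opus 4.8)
The plan is to deduce both statements from the Künneth formula together with the observation that $\chi$ and $\chi'$ are, respectively, the value and the derivative of the Poincaré polynomial at $t=-1$. Write $P_M(t)=\sum_i b_i(M)\,t^i$; since $M$ is a finite CW complex this is an honest polynomial. Straight from the definitions, $\chi(M)=P_M(-1)$, and differentiating term by term gives $P_M'(t)=\sum_i i\,b_i(M)\,t^{i-1}$, so that $P_M'(-1)=\sum_i(-1)^{i-1}i\,b_i(M)=\chi'(M)$. Thus $\chi=P(-1)$ and $\chi'=P'(-1)$; this is exactly the point of view behind Corollary \ref{natural}.

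Next I would record the multiplicativity $P_{M\times N}(t)=P_M(t)\,P_N(t)$. This is the rank form of the Künneth theorem: over a field $b_k(M\times N)=\sum_{i+j=k}b_i(M)b_j(N)$, and over $\mathbb Z$ the extra $\mathrm{Tor}$ summands in the Künneth exact sequence are finite groups and so do not affect the ranks $b_i$. (Equivalently, one may simply work with rational coefficients throughout, since the Betti numbers are unchanged.)

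For part (ii), apply the Leibniz rule to $P_{M\times N}=P_M P_N$, namely $P_{M\times N}'=P_M' P_N+P_M P_N'$, and evaluate at $t=-1$ using the identities above:
\[
\chi'(M\times N)=P_{M\times N}'(-1)=P_M'(-1)P_N(-1)+P_M(-1)P_N'(-1)=\chi'(M)\chi(N)+\chi(M)\chi'(N),
\]
which is the claim. Part (i) is then the special case $N=S^1$: from $P_{S^1}(t)=1+t$ we get $\chi(S^1)=P_{S^1}(-1)=0$ and $\chi'(S^1)=P_{S^1}'(-1)=1$, hence $\chi'(M\times S^1)=\chi'(M)\cdot 0+\chi(M)\cdot 1=\chi(M)$. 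Alternatively, (i) can be checked by hand from $b_k(M\times S^1)=b_k(M)+b_{k-1}(M)$ and a short reindexing.

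There is no real obstacle here; the only step deserving a word of care is the passage from the Künneth short exact sequence to the clean product formula for Poincaré polynomials, i.e.\ noting that the torsion $\mathrm{Tor}$ terms contribute nothing to the Betti numbers. Everything else is the product rule for polynomials evaluated at $-1$. One could also avoid mentioning $P(t)$ altogether and argue directly, expanding $\chi'(M\times N)=\sum_k(-1)^{k-1}k\sum_{i+j=k}b_i(M)b_j(N)$ and splitting $k=i+j$ into its two parts, but the Poincaré-polynomial phrasing makes the structure of the identity (and its relation to the higher invariants $\chi_j$) most transparent.
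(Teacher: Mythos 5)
Your proof is correct and is essentially the ``conceptual proof'' the paper itself points to: part (ii) of the lemma is the $u^1$-coefficient of the identity $Q_{M\times N}(u)=Q_M(u)Q_N(u)$ used in the proof of Theorem \ref{main1}(ii), and your Leibniz-rule-at-$t=-1$ argument is the same computation, since $\chi'=P'(-1)$ is the first Taylor coefficient at $t=-1$. The only organizational difference is that the paper proves (i) directly by a telescoping Künneth computation and labels (ii) as ``direct computation,'' whereas you prove (ii) first and obtain (i) as the special case $P_{S^1}(t)=1+t$; both are valid, and your handling of the $\mathrm{Tor}$ terms in the integral Künneth formula is the right point of care.
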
 
 \begin{proof}  (i) This is just direct computation: Let $b_i$ be the Betti numbers of $M \times S^1$ and $c_i$ the Betti numbers of $M$. By the K\"unneth theorem, one has $b_{i+1} = c_{i+1} + c_i$. Therefore, 
\begin{align*}
\chi'(M \times S^1) & = 0. b_0 + b_1 -2 b_2 +3b_3 - \cdots\\
& = (c_1 + c_0) - 2(c_2 + c_1) +3(c_3 + c_2) \cdots\\
& = c_0 -c_1 +c_2 - \cdots \\
&= \chi(M). \qed
\end{align*}

(ii)  Direct computation. For a conceptual proof, see the proof of part (ii) of Theorem \ref{main1}.  \end{proof}

\subsection*{Kervaire's semi-characteristic.} Let $M$ be a compact oriented  manifold of odd dimension $2n+1$. Since $\chi(M) =0$, Kervaire's \cite{kervaire} semi-characteristic $$K_M = \sum_{i=0}^{i=n} (-1)^ib_i(M) \bmod 2$$ is an important topological invariant of such manifolds.  The following observation, while simple, seems new: the secondary Euler characteristic of $M$ recovers $K_M$.
%A generalization of Kervaire's invariant to nice topological spaces is provided by the parity of the secondary Euler characteristic; this simple observation seems not to have been recorded in the literature.   
\begin{lemma}\label{ker} $\chi'(M) \equiv K_M \bmod 2$. \end{lemma}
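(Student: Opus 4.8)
The plan is to compute $\chi'(M) \bmod 2$ directly from the definition $\chi'(M) = \sum_i (-1)^{i-1} i\, b_i(M)$ and compare it with Kervaire's $K_M = \sum_{i=0}^n (-1)^i b_i(M) \bmod 2$, using Poincar\'e duality $b_i = b_{2n+1-i}$ on the compact oriented $(2n+1)$-manifold $M$. First I would split the sum $\chi'(M)$ into the ``bottom half'' indices $i = 0, \dots, n$ and the ``top half'' indices $i = n+1, \dots, 2n+1$, and in the top half substitute $i = 2n+1-j$ with $j$ ranging over $0, \dots, n$, so that $b_i = b_j$ by duality. This rewrites $\chi'(M)$ as $\sum_{j=0}^n \big[ (-1)^{j-1} j + (-1)^{2n-j}(2n+1-j) \big] b_j$, and since $(-1)^{2n-j} = (-1)^j = -(-1)^{j-1}$, the bracket collapses to $(-1)^{j-1}\big(j - (2n+1-j)\big) = (-1)^{j-1}(2j - 2n - 1)$.

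Next I would reduce modulo $2$: since $2j - 2n - 1$ is odd, $(-1)^{j-1}(2j-2n-1) \equiv 1 \equiv -1 \pmod 2$, regardless of $j$, so each coefficient of $b_j$ is odd. Hence $\chi'(M) \equiv \sum_{j=0}^n b_j \pmod 2$. To match $K_M = \sum_{j=0}^n (-1)^j b_j \bmod 2$, I would just note that $(-1)^j b_j \equiv b_j \pmod 2$ for every $j$, so $\sum_{j=0}^n b_j \equiv \sum_{j=0}^n (-1)^j b_j = K_M \pmod 2$. This gives $\chi'(M) \equiv K_M \pmod 2$, completing the proof.

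The only subtlety worth being careful about is the middle-dimension bookkeeping: the dimension is $2n+1$, so the indices $0, \dots, 2n+1$ split evenly into the two halves $\{0,\dots,n\}$ and $\{n+1,\dots,2n+1\}$ with no ``leftover'' middle index (unlike the even-dimensional case), and the involution $i \mapsto 2n+1-i$ has no fixed point among integers — so the pairing of top and bottom halves is clean and every cohomology group is accounted for exactly once. I expect no real obstacle here; the main (very mild) care needed is tracking the sign $(-1)^{2n+1-j-1} = (-1)^{2n-j} = (-1)^{-j} = (-1)^j$ correctly when folding the top half down, and observing that it is precisely the negative of the bottom-half sign $(-1)^{j-1}$, which is what produces the uniform odd coefficients after reduction mod $2$.
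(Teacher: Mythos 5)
Your proof is correct and relies on the same essential ingredients as the paper's argument --- Poincar\'e duality $b_i = b_{2n+1-i}$ combined with reduction mod $2$ --- and it reaches the same intermediate identity $\chi'(M) \equiv \sum_{j=0}^{n} b_j \pmod 2$. The only difference is organizational: you fold the sum via duality first and reduce mod $2$ afterwards, which lets you avoid the paper's case split on the parity of $n$ (the paper reduces mod $2$ first, keeping only the odd-index Betti numbers, and then applies duality separately for $n$ even and $n$ odd).
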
 

\begin{proof} Clearly $ \chi'(M) =\sum_{i=0}^{i=n} b_{2i +1} \bmod 2$.  If $n =2k+1$, then using $(b_i = b_{4k+3-i})$ 
we have $\chi'(M) = b_1 + b_3 + \cdots b_n + b_{2k} + b_{2k-2} + \cdots b_0 \bmod 2$.  So $\chi'(M) = K_M \bmod 2$ in this case. If $n =2k$, then using $b_i = b_{4k+1 -i}$, we have $\chi'(M) 
 = b_1 + b_3 + \cdots b_{2k-1} + b_{2k} + b_{2k-2} + \cdots b_0 \bmod 2$. So $\chi'(M) = K_M \bmod 2$.  \end{proof}

\subsection*{Basic definitions and results.} 

\begin{definition} (Higher Euler characteristics) For any nice topological space $M$ and for any integer $j \ge 0$, we define the $j$'th Euler characteristic of $M$ as \begin{equation}\label{chi-j}
\chi_j(M) = \sum_{i} (-1)^{i-j} ~\binom{i}{j} ~b_i . \end{equation}
\end{definition} 
Clearly, $\chi_0(M) = \chi(M)$ and $\chi_1(M) = \chi'(M)$. If $M$ is a manifold of dimension $N$, then $\chi_j(M) =0$ for $j >N$. 
Note that $\chi_j(S^1) =0$ for $j \neq 1$ and $\chi_1(S^1) =1$. 
The higher Euler characteristics\footnote{The ``secondary" Euler characteristic is the first higher characteristic.}
 share many of the properties of $\chi$ and $\chi'$. 

\begin{theorem}\label{main1}

(i) $\chi_j$ is invariant under homotopy; for a disjoint union $U \amalg V$, one has $\chi_j(U \amalg V) = \chi_j(U) + \chi_j(V)$ and $\chi_j(\text{point}) =0$ for $j >0$.

(ii) If $\chi_r(M)$ and $\chi_r(N)$ vanish for $0 \le r <j$, then \begin{align*} \chi_k(M \times N) = 0, & \qquad {\rm for} ~0 \le k <2j\\
\chi_{2j}(M \times N) &= \chi_j(M).\chi_j(N),\\ 
\chi_{2j+1}(M \times N) &= \chi_j(M). \chi_{j+1}(N) + \chi_{j+1}(M).\chi_j(N).\end{align*} 

(iii) Let $M= N \times (\underset{j~ \text{factors}}{S^1 \times \cdots \times S^1})$. Then $\chi_j(M) = \chi(N)$ and $\chi_{k+j}(M) = \chi_k(N)$ for $k \ge 0$, $$\chi_0(M) = 0, \cdots, \chi_{j-1}(M) =0.$$    
\end{theorem}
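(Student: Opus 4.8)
\textbf{Proof proposal for Theorem \ref{main1}.}

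The plan is to reduce all three parts to a single clean statement: $\chi_j(M)$ is the $j$-th Taylor coefficient at $t=-1$ of the Poincar\'e polynomial $P_M(t) = \sum_i b_i(M)\, t^i$, i.e. $\chi_j(M) = \frac{1}{j!}\,P_M^{(j)}(-1)$. Indeed, expanding $P_M(t)$ around $t=-1$ gives $P_M(t) = \sum_i b_i\,((t+1)-1)^i = \sum_i b_i \sum_j \binom{i}{j}(t+1)^j(-1)^{i-j}$, so the coefficient of $(t+1)^j$ is exactly $\sum_i(-1)^{i-j}\binom{i}{j}b_i = \chi_j(M)$. This is Corollary \ref{natural} referred to in the introduction, and I would establish it first, since it turns everything into elementary manipulations of one-variable polynomials.

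For part (i): homotopy invariance is immediate because the $b_i$ are homotopy invariants and $\chi_j$ is a fixed integer combination of them. For a disjoint union, $P_{U\amalg V}(t) = P_U(t) + P_V(t)$, so taking $j$-th Taylor coefficients at $-1$ gives $\chi_j(U\amalg V) = \chi_j(U) + \chi_j(V)$. For a point, $P_{\mathrm{pt}}(t) = 1$ is constant, so all its derivatives vanish and $\chi_j(\mathrm{pt}) = 0$ for $j>0$ while $\chi_0(\mathrm{pt}) = 1$.

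For parts (ii) and (iii): the K\"unneth theorem (over $\mathbb{Q}$, or rationally, which suffices for Betti numbers) gives $P_{M\times N}(t) = P_M(t)\,P_N(t)$. So I would work entirely with the order of vanishing of these polynomials at $t=-1$. The hypothesis that $\chi_r(M) = \chi_r(N) = 0$ for $0\le r<j$ says precisely that $(t+1)^j$ divides both $P_M(t)$ and $P_N(t)$; write $P_M(t) = (t+1)^j Q_M(t)$ and $P_N(t) = (t+1)^j Q_N(t)$. Then $P_{M\times N}(t) = (t+1)^{2j} Q_M(t) Q_N(t)$, which is divisible by $(t+1)^{2j}$, giving $\chi_k(M\times N) = 0$ for $k<2j$; and the coefficient of $(t+1)^{2j}$ is $Q_M(-1)Q_N(-1) = \chi_j(M)\chi_j(N)$, while the coefficient of $(t+1)^{2j+1}$ is $Q_M(-1)Q_N'(-1) + Q_M'(-1)Q_N(-1)$. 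Identifying $Q_M(-1) = \chi_j(M)$, $Q_M'(-1) = \chi_{j+1}(M)$ (since $Q_M$ is the Taylor series of $P_M$ shifted down by $j$ in degree), and similarly for $N$, yields the stated formula for $\chi_{2j+1}$. For part (iii), apply this iteratively: $P_{S^1}(t) = 1+t = (t+1)$, so $P_M(t) = (t+1)^j P_N(t)$; reading off Taylor coefficients at $-1$ gives $\chi_{k+j}(M) = \chi_k(N)$ for all $k\ge 0$ (in particular $\chi_j(M) = \chi_0(N) = \chi(N)$) and $\chi_0(M) = \cdots = \chi_{j-1}(M) = 0$.

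I expect the only genuine subtlety to be bookkeeping: making sure the K\"unneth formula is applied rationally so that $b_i(M\times N) = \sum_{p+q=i} b_p(M) b_q(N)$ holds exactly (no torsion correction to ranks — which is automatic), and being careful that "$Q_M'(-1) = \chi_{j+1}(M)$" is correct, which follows because shifting a polynomial's expansion point is compatible with dividing out $(t+1)^j$: if $P_M(t) = \sum_{r\ge j}\chi_r(M)(t+1)^r$ then $Q_M(t) = \sum_{r\ge j}\chi_r(M)(t+1)^{r-j} = \sum_{s\ge 0}\chi_{s+j}(M)(t+1)^s$, so $Q_M(-1) = \chi_j(M)$ and $Q_M'(-1) = \chi_{j+1}(M)$. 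Everything else is routine polynomial algebra, and no step presents a real obstacle once the Taylor-coefficient reformulation is in place. Alternatively, parts (ii) and (iii) can be proved by a direct double-sum manipulation using the Vandermonde-type identity $\sum_{p+q=k}\binom{p}{a}\binom{q}{b} = \binom{k}{a+b}$, but the generating-function argument is cleaner and I would present that.
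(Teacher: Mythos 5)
Your proof is correct and follows essentially the same route as the paper: both rest on the identification of $\chi_j(M)$ with the $j$-th coefficient of the Taylor expansion of $P_M(t)$ at $t=-1$ (Lemma \ref{lem2} and Corollary \ref{natural}), after which (i)--(iii) reduce to additivity and multiplicativity of Poincar\'e polynomials exactly as in the paper's proof. The only cosmetic difference is that you derive the key lemma by substituting $t=((t+1)-1)$ and expanding binomially, whereas the paper iterates differentiation and evaluation at $t=-1$; both are fine.
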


\begin{remark}  There are at least two natural choices for the definition of the higher Euler characteristics; for $\chi_2(M)$, one could take either $\sum_i(-1)^i i^2 b_i$ or $\sum_i (-1)^i i(i-1) b_i$. More generally, an alternate definition is given by $$\chi_j(M) = \sum_i (-1)^{i-j} i^j b_i(X).$$ One has $\chi_0(X) = \chi(X)$ and $\chi_1(X) = \chi'(X)$. \qed \end{remark} 
Our proof of Theorem \ref{main1} is based on a natural interpretation of $\chi_j$'s provided by Lemma \ref{lem2} and Corollary \ref{natural}.  It is unclear if there is a simple natural proof of Theorem \ref{main1} which completely avoids this new interpretation of $\chi_j$. 

\begin{lemma}\label{lem2} For any polynomial $P(t) = \sum_i b_i t^i \in \mathbb Z[t]$, 
consider the expansion of $P(t)$ about $t=-1$, namely, define $Q(u) = \sum_i a_i u^i \in \mathbb Z[u]$ by  
$$P_M(t) = Q_M(1+t),$$\begin{equation}\label{pab}P(t) = b_0 +b_1t + b_2t^2 + \cdots = a_0 + a_1 (1+t) + a_2 (1+t)^2 + \cdots . \end{equation}
For any $j \ge 0$,  one has $$a_j = \sum_i (-1)^{i-j} \binom{i}{j} b_i.$$
\end{lemma}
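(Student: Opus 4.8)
The plan is to substitute $t = u-1$ directly into $P$ and read off the coefficients of the resulting polynomial in $u$ using the binomial theorem. First I would observe that $\{(1+t)^i\}_{i \ge 0}$ is a $\mathbb{Z}$-basis of $\mathbb{Z}[t]$, so the expansion (\ref{pab}) exists and is unique; equivalently, $Q(u) = P(u-1)$, and $a_j$ is by definition the coefficient of $u^j$ in $P(u-1)$. Then I would write $P(u-1) = \sum_i b_i (u-1)^i$ and apply the binomial theorem $(u-1)^i = \sum_{j=0}^{i} \binom{i}{j} (-1)^{i-j} u^j$. Interchanging the two finite sums and collecting the coefficient of $u^j$ (with the usual convention $\binom{i}{j} = 0$ for $i < j$) gives exactly $a_j = \sum_i (-1)^{i-j} \binom{i}{j} b_i$, as claimed.

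Alternatively, one can argue via Taylor's formula: $a_j = \tfrac{1}{j!} Q^{(j)}(0) = \tfrac{1}{j!} P^{(j)}(-1)$, and differentiating $P$ term by term yields $P^{(j)}(t) = \sum_i b_i \tfrac{i!}{(i-j)!}\, t^{i-j}$; evaluating at $t = -1$ and dividing by $j!$ produces the same formula. Both routes are elementary.

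There is essentially no real obstacle here; the only points needing a moment of care are the bookkeeping with the index ranges and the binomial-coefficient convention, and — in the Taylor version — the observation that dividing by $j!$ keeps us in $\mathbb{Z}$, which is automatic since the answer is visibly a sum of integers. I would present the binomial-theorem argument, since it is purely algebraic, valid over $\mathbb{Z}$, and makes the integrality of the $a_j$ transparent; this is also the form that feeds most cleanly into Corollary \ref{natural} and the proof of Theorem \ref{main1}.
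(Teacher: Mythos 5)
Your proposal is correct. Your preferred route, however, is not the one the paper takes: the paper proves the lemma by repeatedly applying $\frac{d}{dt}$ to the identity (\ref{pab}) and evaluating at $t=-1$ at each stage, which is exactly your ``alternative'' Taylor-formula argument ($a_j = \tfrac{1}{j!}P^{(j)}(-1)$), carried out explicitly for $j=0,1,2$ and then asserted by iteration. Your main argument --- substituting $t=u-1$, expanding each $(u-1)^i$ by the binomial theorem, and collecting the coefficient of $u^j$ --- is a genuinely different and arguably cleaner derivation: it avoids differentiation entirely, works verbatim over any commutative coefficient ring (relevant for the $K_0(\mathcal P)$-valued Poincar\'e functions in Theorem \ref{main2} and the motivic variants, where dividing by $j!$ is not a priori available), and makes the integrality of the $a_j$ manifest rather than an afterthought. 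The paper's derivative route has the virtue of directly exhibiting the $\chi_j$ as Taylor coefficients of $P$ at $t=-1$, which is the interpretation emphasized in Corollary \ref{natural} and Remark \ref{analogy-zeta}; your binomial route gets the same formula with less machinery. Both are complete proofs.
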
  

\begin{proof}  Evaluating both sides of (\ref{pab}) at $t=-1$ gives 
$$ a_0 = b_0 -b_1 + \cdots = \sum_i (-1)^i b_i= P(-1).$$
Taking the formal derivative of (\ref{pab}) with respect to $t$ gives \begin{equation}\label{pab2} b_1 + 2b_2t + 3b_3 t^2 + \cdots =  a_1 + 2a_2 (1+t) + 3 a_3(1+t)^2 + \cdots . \end{equation}
Evaluating at $t=-1$ gives $$a_1 = b_1 -2b_2 + 3b_3 - \cdots = \sum _i (-1)^{i-1} i b_i.$$
Applying $\frac{d}{dt}$ to (\ref{pab2}) gives \begin{equation}\label{pab3} 
2b_2 + 6b_3t + \cdots + n(n-1) b_n t^{n-2} + \cdots = 2a_2 + 8a_3(1+t) + \cdots + n(n-1) a_n (1+t)^{n-2} + \cdots . \end{equation} 
Plugging in $t=-1$ gives $$2a_2 = 2b_2 -6b_3 + 12b_4 + \cdots + n(n-1)b_n (-1)^{n-2} + \cdots$$ and so $$a_2 = \sum_i (-1)^{i-2} \binom{i}{2}b_i.$$Iterating these steps (apply $\frac{d}{dt}$ and evaluate at $t=-1$)  provides the required relation for any $a_j$. \end{proof}

\begin {corollary}\label{natural}  For any nice topological space $M$, write the Poincar\'e polynomial $P_M(t) = \sum_i b_i(M)t^i$ as a function of $u= 1+t$, i.e., define $Q_M(u) \in \mathbb Z[u]$ by  $P_M(t) = Q_M(1+t).$ Then, $$Q_M(u) = \sum_j ~\chi_j(M)~u^j.$$ 
\end{corollary}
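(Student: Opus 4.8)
The plan is to observe that Corollary \ref{natural} is an immediate repackaging of Lemma \ref{lem2}, so the proof is essentially one line; the only task is to set up the bookkeeping carefully. First I would apply Lemma \ref{lem2} to the particular polynomial $P(t) = P_M(t) = \sum_i b_i(M) t^i$, whose coefficients are the Betti numbers $b_i = b_i(M)$. By definition, $Q_M(u)$ is the polynomial with $P_M(t) = Q_M(1+t)$, which is exactly the $Q(u)$ produced by the lemma; write $Q_M(u) = \sum_j a_j u^j$. The lemma then gives $a_j = \sum_i (-1)^{i-j} \binom{i}{j} b_i$ for every $j \ge 0$.

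Next I would simply recognize the right-hand side: comparing with the defining formula \eqref{chi-j} for the higher Euler characteristics, $\chi_j(M) = \sum_i (-1)^{i-j}\binom{i}{j} b_i$, we see $a_j = \chi_j(M)$. Substituting back yields $Q_M(u) = \sum_j \chi_j(M)\, u^j$, which is the claim. One small remark worth inserting is that the sums are finite (only finitely many $b_i$ are nonzero, since $M$ is a finite CW complex), so all manipulations — in particular the polynomial identity $P_M(t) = Q_M(1+t)$ and the passage from coefficients of $P_M$ to coefficients of $Q_M$ — are legitimate in $\mathbb Z[t]$ and $\mathbb Z[u]$ respectively, with no convergence issues.

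There is essentially no obstacle here: the content has already been extracted into Lemma \ref{lem2}, and Corollary \ref{natural} is the translation of that algebraic statement into the language of Poincaré polynomials and higher Euler characteristics. If I wanted a self-contained alternate phrasing, I could instead argue directly via Taylor expansion: $Q_M$ is a polynomial, so $a_j = \frac{1}{j!} Q_M^{(j)}(0) = \frac{1}{j!} \frac{d^j}{dt^j}\big|_{t=-1} P_M(t)$, and then check by the Leibniz/binomial computation that $\frac{1}{j!}\frac{d^j}{dt^j}\big(\sum_i b_i t^i\big)\big|_{t=-1} = \sum_i \binom{i}{j}(-1)^{i-j} b_i = \chi_j(M)$; but this merely reproves Lemma \ref{lem2}, so citing it is cleaner. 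The only thing to be careful about is matching the index conventions (that $\chi_j$ in \eqref{chi-j} uses the topological Betti numbers of $M$, i.e.\ the coefficients of $P_M$) so that the identification $a_j = \chi_j(M)$ is literally the displayed formula in Lemma \ref{lem2}.
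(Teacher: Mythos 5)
Your proof is correct and matches the paper's intent exactly: the corollary is stated as an immediate consequence of Lemma \ref{lem2} applied to $P_M(t)$, with the coefficients $a_j$ identified with $\chi_j(M)$ via the defining formula \eqref{chi-j}. The remark about finiteness of the sums is a harmless extra precaution.
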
 
This shows that the higher Euler characteristics form a natural generalization of the Euler characteristic: $\chi_M = P_M(-1)$ and $\chi_1(M), \chi_2(M), \cdots$ {\em are the coefficients of the Taylor expansion of  $P_M(t)$ at $t=-1$}.  

\begin{proof} (of Theorem \ref{main1})  (i) the first statement is clear as the Betti numbers are homotopy invariant. For the second, use $P_{U \amalg V} (t) = P_U(t) + P_V(t)$.  

(ii) Since $P_{M \times N}(t) = P_M(t). P_N(t)$ (K\"unneth), so $Q_{M \times N}(u) = Q_M(u). Q_N(u)$.  Now apply Lemma \ref{lem2}. We are given that $Q_M(u)$ and $Q_N(u)$ are both divisible by $u^{j}$. So $Q_{M \times N}(u)$ is divisible by $u^{2j}$. As $Q_M(u) = u^j (\chi_j(M) + \chi_{j+1}(M) u + \cdots)$ and $Q_N(u) = u^j(\chi_j(N) + \chi_{j+1}(N) u + \cdots)$, we have $Q_{M \times N}(u) = u^{2j} (\chi_j(M). \chi_j(N) + (\chi_j(M).\chi_{j+1}(N) + \chi_{j+1}(M).\chi_j(N)) u + \cdots)$.  
Now apply Lemma \ref{lem2}.

(iii) By K\"unneth, one has $P_M(t) = P_N(t) (1+t)^j$ and so $Q_M(u) = Q_N(u) u^j$. Now apply Lemma \ref{lem2}. 
\end{proof}

\begin{remark}\label{analogy-zeta} The higher Euler characteristics are {\it special values} of the Poincar\'e polynomial, in the following sense.

 (a)  For any scheme $X$ of finite type over Spec~$\mathbb Z$, one introduces the analytic function $\zeta_X(s)$ (the zeta function of $X$). Conjecturally, there is arithmetic information in the special values of  $\zeta_X(s)$  at $s=n \in \mathbb Z$; if $\zeta_X(n) =0$, then one looks at  the leading term in the Taylor expansion of $\zeta_X(s)$ about $s=n$. This leading term is called a ``special value" of $\zeta_X(s)$ at $s=n$. In our context, Lemma \ref{lem2} tells us that the Euler characteristic is the value of the Poincar\'e polynomial $P(t)$ at $t=-1$ and the  ``special values" of $P(t)$ at $t=-1$ are the higher Euler characteristics.

(b) Part (iii) of Theorem \ref{main1} provides a partial answer to {\bf Q1} posed above. Namely, each factor of $S^1$ in $M$ causes the vanishing of a higher Euler characteristic of $M$. Thus, the number of factors $r$ of $S^1$ in $M$ satisfies the inequality $$ r \le {\rm ord}_{~t=-1}~ P_M(t) = {\rm ord}_{~u=0}~Q_M(u)$$ with equality if and only if $\chi(N) \neq 0$. 
So  $\chi_0(M) =0, \cdots, \chi_{r-1}(M) =0$. 

(c) The higher Euler characteristics do not satisfy (\ref{euler}) in general;  this follows from the fact that, in general, $P_{U\cup V} \neq P_U(t) + P_V(t) - P_{U \cap V}(t)$. %Viewing (\ref{euler}) as a property of $\chi$ with respect to cartesian squares, Goodwillie calculus \cite{goodwillie} suggests the existence of identities satisfied by $\chi_j$ for $j+1$-cartesian cubes. 

(d) There is a straightforward generalization of higher Euler characteristics of local systems (or sheaves) on nice topological spaces (or algebraic varieties). Namely, given a local system $A$ of say $\mathbb Q$-vector spaces on a nice space $X$,  the cohomolology groups $H^i(X, A)$ are finite dimensional $\mathbb Q$-vector spaces. The higher Euler characteristics $\chi_j(X, A)$ are the coefficients of the Taylor expansion about $t=-1$ of the Poincar\'e polynomial $P_X(A, t) = \sum_i {\rm dim}_{\mathbb Q}~H^i(X, A)~t^i$. When $A =\mathbb Q$ is  the trivial local system, one recovers the usual higher Euler characteristics: $\chi_j(X, A) = \chi_j(X)$.   Similarly, if $\mathcal F$ is a coherent sheaf on a proper variety $X$ over a field $K$, then the cohomology groups $H^i(X, \mathcal F)$ are finite dimensional $K$-vector spaces. The associated Poincar\'e polynomial leads to the higher Euler characteristics of $\mathcal F$. In the same vein, given a $\mathbb Q_{\ell}$-constructible sheaf $\mathcal F$ on any variety $X$ over a field $K$ of characteristic different from $\ell$, the cohomology groups (with compact support) $H^i_c(X, \mathcal F)$ are finite dimensional $\mathbb  Q_{\ell}$-vector spaces; the coefficients of the Taylor expansion about $t =-1$ of the Poincar\'e polynomial $\sum_i {\rm dim}_{\mathbb Q_{\ell}}~H^i_c(X, \mathcal F) t^i$ are the higher Euler characteristics of the sheaf $\mathcal F$ over $X$. \qed
\end{remark} 

\begin{remark}  Let $X \to B$ be a fibration with fiber $F$.  Then the well known identity $\chi(X) = \chi(F)\chi(B)$ does not generalize to higher Euler characteristics.  Lemma \ref{lem} does not generalize (from products) to fibrations. For instance, consider the Hopf fibration $S^3 \to S^2$ with fibers $S^1$. Lemma \ref{lem} (part (ii)) fails in this case: $\chi'(S^3) = 3 \neq 2 \times 1 + (-2) \times 0 = \chi(S^2) \chi'(S^1) + \chi'(S^2) \chi(S^1)$. 
\end{remark}

\begin{proposition}\label{mcd} Let $M$ be a compact oriented manifold of dimension $N$. For any integer $r \ge 1$, write $Sym^r(M)$ for the $r^{\text 'th}$ symmetric product of $M$. The higher Euler characteristics $\chi_j(Sym^r(M))$ of $Sym^r(M)$ are determined by the Betti numbers $b_i(M)$ of $M$. \end{proposition}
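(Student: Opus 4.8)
\emph{The plan} is to reduce everything to Macdonald's generating‑function formula for the Poincar\'e polynomials of symmetric products and then invoke Corollary \ref{natural}. Recall that if $P_M(t) = \sum_i b_i(M)\,t^i$ is the Poincar\'e polynomial of $M$ (the $b_i$ being $\mathbb Z$-ranks, equivalently $\mathbb Q$-dimensions of cohomology), Macdonald's theorem gives the identity
$$\sum_{r \ge 0} P_{Sym^r(M)}(t)\, z^r \;=\; \frac{\displaystyle\prod_{i\ \text{odd}} (1 + z t^{i})^{\,b_i(M)}}{\displaystyle\prod_{i\ \text{even}} (1 - z t^{i})^{\,b_i(M)}}$$
in $\mathbb Z[t][[z]]$. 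In particular the coefficient of $z^r$ on the right, which is $P_{Sym^r(M)}(t)$, is a polynomial in $t$ whose coefficients are explicit universal polynomial expressions in the numbers $b_i(M)$, and hence depends on $M$ only through its Betti numbers.

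First I would note that $Sym^r(M) = M^r/S_r$ is a nice space in the sense of this paper (it is compact and triangulable, hence homotopy equivalent to a finite CW complex), so the invariants $\chi_j(Sym^r(M))$ are defined. By Corollary \ref{natural}, $\chi_j(Sym^r(M))$ is the coefficient of $u^j$ in $Q_{Sym^r(M)}(u)$, where $P_{Sym^r(M)}(t) = Q_{Sym^r(M)}(1+t)$; that is, it is the $j^{\text{th}}$ coefficient of the Taylor expansion of $P_{Sym^r(M)}(t)$ about $t=-1$. Since that polynomial is determined by the $b_i(M)$, so are all of its Taylor coefficients at $t=-1$, and the proposition follows. (Nothing in the argument actually uses that $M$ is an oriented manifold beyond ensuring $P_M$ is a genuine polynomial and $Sym^r(M)$ is nice.)

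For a cleaner statement one may substitute $t = u-1$ into Macdonald's identity, obtaining directly
$$\sum_{r \ge 0}\Bigl( \textstyle\sum_j \chi_j(Sym^r(M))\, u^j \Bigr) z^r \;=\; \frac{\displaystyle\prod_{i\ \text{odd}} \bigl(1 + z(u-1)^{i}\bigr)^{\,b_i(M)}}{\displaystyle\prod_{i\ \text{even}} \bigl(1 - z(u-1)^{i}\bigr)^{\,b_i(M)}},$$
from which each $\chi_j(Sym^r(M))$ is read off as a polynomial in the $b_i(M)$; as a sanity check, setting $u=0$ collapses the right-hand side to $(1-z)^{-\chi(M)}$, recovering the classical $\chi(Sym^r(M)) = \binom{\chi(M)+r-1}{r}$.

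I do not expect a real obstacle: the substance is entirely in Macdonald's formula, which is classical. The only points needing a word of care are (i) citing Macdonald's formula in a form compatible with the Betti numbers used here — it is usually stated with $\mathbb Q$-coefficients, and one passes to $\mathbb Z$-ranks by universal coefficients — and (ii) confirming that $Sym^r(M)$ lies in the class of spaces for which $\chi_j$ was defined, which is immediate for $M$ a compact manifold.
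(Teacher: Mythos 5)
Your proposal is correct and follows essentially the same route as the paper: both arguments cite Macdonald's generating-function formula to see that $P_{Sym^r(M)}(t)$ is a universal expression in the Betti numbers of $M$, and then read off the $\chi_j$ as Taylor coefficients at $t=-1$ via Corollary \ref{natural}. The explicit substitution $t=u-1$ and the sanity check recovering $(1-z)^{-\chi(M)}$ are nice additions but not needed.
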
 
\begin{proof}For any nice space $X$, write  $P(X) = \sum_i (-)^i b_i(X) z^i \in \mathbb Z[z]$ for the (graded) Poincar\'e polynomial of $X$.  Recall the classical formula of  I.G.~Macdonald's \cite{macdonald, wittzeta} which show that $P(Sym^n(M))$ is determined by that of $P(M)$: 
\begin{align*} \sum_{r=0}^{\infty} P(Sym^r(M)) t^r & = \frac{ (1-zt)^{b_1(M)} (1-z^3t)^{b_3(M)} \cdots}{(1-t)^{b_0(M)} (1-z^2t)^{b_2(M)} \cdots}\\
& = \prod_{j=1}^{j=N} (1-z^jt)^{(-1)^{j+1} b_j(M)}.
\end{align*}
Since $P(M)$ determines $P(Sym^r(M))$ which in turn determines $\chi_j(Sym^r(M))$, the Betti numbers of $M$ determine the integers $\chi_j(Sym^r(M))$ for all $r, j \ge 0$.
\end{proof}
\begin{remark} Macdonald \cite{macdonald, wittzeta} also proved that $\chi(M)$ determines $\chi(Sym^r(M))$ for all $r \ge 0$: $$\sum_{r=0}^{\infty} \chi(Sym^r(M)) t^r = \frac{1}{(1-t)^{\chi(M)}};$$ it is unclear if this generalizes to $\chi_j$ for $j >0$. % Proposition  \ref{mcd} admits the following refinement (a Macdonald formula for each individual $\chi_j$ and $j >1$):

 {\em Does the integer $\chi_j(M)$ determine  the integers $\chi_j(Sym^r (M))$ for all $r \ge1$?} 
  \qed\end{remark} 
\section{Examples of secondary Euler characteristics.}\label{zeta}

For any bounded complex $C^{\bullet}$ of finitely generated abelian groups 
\[ \cdots \to 0 \to C_0 \xrightarrow{d} C_1 \xrightarrow{d} \cdots C_n \to 0 \to \cdots,\]
one defines $\chi(C^{\bullet}) = \sum_{i=0}^{i=n}(-1)^i {\rm rank} ~C_i$; it is elementary that $\chi(C^{\bullet}) = \sum_{i=0}^{i=n}(-1)^i {\rm rank} ~H_i(C^{\bullet})$. We write $\chi'(C^{\bullet}) = \sum_{i=0}^{i=n}(-1)^{i-1} i ({\rm rank}~ C_i)$; this is of interest when $\chi(C^{\bullet}) =0$. 
 
Similarly, given any abelian category $\mathcal A$ and  any bounded complex  $C^{\bullet}$ of objects in $\mathcal A$, one defines $$\chi(C^{\bullet}) = \sum_{i}(-1)^i ~[C_i]~\text{and}~\chi'(C^{\bullet})  = \sum_{i}(-1)^{i-1} i~ [C_i], $$ which are elements of $K_0(\mathcal A)$. Here $[X]$ denotes the class in $K_0(\mathcal A)$ for any object $X$ of $\mathcal A$.

Some of the well known occurences of secondary Euler characteristics include 

\begin{itemize}

\item (Ray-Singer) \cite{raysinger}  Let $M$ be a compact oriented manifold without boundary of dimension $N$. The 
Franz-Reidemeister-Milner torsion (or simply R-torsion) $\tau(M, \rho)\in \mathbb R$ is defined for any acyclic orthogonal representation 
$\rho$ of the fundamental group $\pi_1(M)$,  Let $K$ be a smooth triangulation of $M$ and $\Delta_j$ be the combinatorial
Laplacians associated with $K$ and $\rho$. Then \cite[Proposition 1.7]{raysinger} $${\rm log}~\tau(M, \rho) = \frac{1}{2} \sum_{i =0}^{i=N} (-1)^{i+1} i~{\rm log~ det}~(-\Delta_i).$$ Ray-Singer conjectured (and J.~Cheeger-W.~M\"uller proved) that this is equal to analytic torsion (which they defined in terms of a Riemannian structure on $M$).

(It is reasonable to introduce ``the torsion Poincar\'e polynomial'' 
\begin{equation}\label{torsionP}
R(M, \rho)(t) = \sum_i {\rm log~det}~(-\Delta_i) t^i \in \mathbb R[t];
\end{equation} as its Taylor expansion $R(M, \rho) = \sum_j c_j(M, \rho) (t-1)^j$ at $t=-1$ contains ${\rm log}~\tau(M, \rho)$ as $c_1(M, \rho)$, the other coefficients $c_j$ can be considered as (logarithms of) {\it higher analytic torsion}  \cite{anton} of $M$ and $\rho$.)  
\item (Lichtenbaum) \cite{lichtenbaum} For any smooth projective variety $X$ over a finite field $\mathbb F_q$, the Weil-\'etale cohomology groups $H^i_W(X , \mathbb Z)$ give a bounded complex  $C^{\bullet}$ (of finitely generated abelian groups)  $$C^{\bullet}:\qquad \cdots H^i_W(X, \mathbb Z) \xrightarrow{\cup \theta}H^{i+1}_W(X, \mathbb Z) \cdots;$$one has $\chi(C^{\bullet}) =0$ and $\chi'(C^{\bullet}) $ is the order of vanishing of the zeta function $Z(X,t)$ at $t=1$. 

%\item (Milne) \cite{milne} The exponent of $q$ in the special value (leading term of the Laurent  expansion) of $Z(X,t)$ at $t =q^{-r}$ is given by $$\sum_{0\le i <r} (-1)^i (r-i) \chi(\Omega^i) = \sum_{j\ge 0~, ~0\le i <r} (-1)^{i+j} h^{i,j}_X$$ where $\chi(\mathcal F)$ is the Euler characteristic of any coherent sheaf $\mathcal F$ on $X$ and $h^{i,j}_X = {\rm dim}_{\mathbb F_q} H^j(X, \Omega^i)$ are the Hodge numbers of $X$. 

\item (Grayson) \cite[\S 3, p.~103]{grayson} Let $R$ be a commutative ring and $N$ a finitely generated projective $R$-module. If $S^kCN$ is the $k$'th symmetric product of the mapping cone $CN$ of the identity map on $N,$ then Grayson's formula for the $k$'th Adams operation $\psi^k$ reads $$\psi^k[N] = \chi'(S^k CN).$$
(This raises the question: {\em Is there a natural interpretation of $\chi_j(S^k CN)$ for $j >1$?})

\item (Fried) \cite[Theorem 3]{fried} Let $X$ be a closed oriented hyperbolic manifold of dimension $2n+1>2$ and let $\rho$ be an orthogonal representation of $\pi_1(X)$. Write $V_{\rho}$ for the corresponding local system on $X$. The order of vanishing of the Ruelle zeta function $R_{\rho}(s)$ at $s=0$ is given by $$2\sum_{i=0}^{i=n}~(-1)^i~(n+1-i)~{\rm dim}~H^i(X, V_{\rho}).$$   
\item (Bunke-Olbrich) \cite{bunkeo} Given a locally symmetric space of rank one $Y = \Gamma\backslash{G}/K$ and a homogeneous 
vector bundle $V$ (this depends on a pair ${\sigma, \lambda}$ and the associated distribution globalization $V_{-\infty}$ (a complex representation of $\Gamma$), the order of vanishing of the Selberg zeta function $Z_S(s, \sigma)$ at $s =\lambda$ is given by $\chi'(\Gamma, V_{\infty})$ (Patterson's conjecture) 
$$\sum_i (-1)^{i+1} i~{\rm dim}~H^i(\Gamma, V_{\infty}).$$
\end{itemize}

\section{K-theoretic variants}\label{kth} 

K-theory provides another general context to develop higher Euler characteristics.  

As in \cite{grayson},  let $\mathcal P$ be an exact category with a suitable notion of tensor product (bi-exact), symmetric power and exterior power. Examples include the category $\mathcal P(X)$ of vector bundles over a scheme $X$, the category $\mathcal P (R)$ of finitely generated projective modules over a commutative ring $R$ and for a fixed group $\Gamma$, the category $\mathcal P(\Gamma, R)$ of representations of $\Gamma$ on finitely generated projective $R$-modules. For any object $N$ in $\mathcal P$, let us write $[N]$ for the class of $N$ in the Grothendieck ring $K_0(\mathcal P)$. For any bounded complex $M$ over $\mathcal P$, we write $\chi(M) = \sum_i (-1)^i [M_i] \in K_0(\mathcal P).$

\begin{definition} (i) The higher Euler classes $\chi_j(M)$ of $M$ are defined by $$\chi_j(M) = \sum_i (-1)^{i-j} \binom{i}{j} [M_i] \in K_0(\mathcal P), \qquad j\ge 0.$$

(ii) The Poincar\'e function $P_M(t)$ is defined as $$P_M(t) = \sum_i [M_i] t^i \in K_0(\mathcal P)[t, t^{-1}].$$\end{definition} 
Clearly, $\chi_0(M) = \chi(M)$ and $\chi_1(M) = \chi'(M)$. If $M$ is concentrated in non-negative degrees, then $P_M(t)$ is the Poincar\'e polynomial of $M$. If $M[n]$ is the shifted complex (so that $M[n]_i = M_{i +n}$), then $t^n P_{M[n]}(t) = P_M(t)$.  Defining $Q_M(u) \in K_0(\mathcal P)[u, u^{-1}]$ by $Q_M(1+t) = P_M(t)$, we have $(u-1)^n Q_{M[n]}(u) = Q_M(u)$.    

\begin{theorem}\label{main2} Let $M$ and $N$ be bounded complexes in $\mathcal P$ concentrated in non-negative degrees. 

(i) If $\chi_r(M)$ and $\chi_r(N)$ vanish for $0 \le r <j$, then \begin{align*} \chi_k(M \otimes N) = 0,  \quad {\rm for} ~0 \le k <2j &\\
\chi_{2j}(M \otimes N) = \chi_j(M).\chi_j(N),&\\ 
\chi_{2j+1}(M \otimes N) = \chi_j(M). \chi_{j+1}(N) + & \chi_{j+1}(M).\chi_j(N).\end{align*} 

(ii) If $M =CN$ is the cone of a self-map $N \to N$, then $\chi_{j+1}(M) = \chi_j(N)$ 

(iii) Let $M=  C^j(N) = C(\cdots C(N) \cdots)$ be an $j$-fold iterated cone on $N$. Then $\chi_j(M) = \chi(N)$ and $\chi_{k+j}(M) = \chi_k(N)$ for $k \ge 0$, $$\chi_0(M) = 0, \cdots, \chi_{j-1}(M) =0.$$     
\end{theorem}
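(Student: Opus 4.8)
The plan is to mirror exactly the strategy used for Theorem \ref{main1}, replacing Künneth with the multiplicativity of the Poincaré function and the product-with-$S^1$ operation with the cone construction. The key bookkeeping device is the substitution $Q_M(u) = P_M(1+t)$: part (i) of Lemma \ref{lem2} (applied coefficient-by-coefficient in $K_0(\mathcal P)$, which is legitimate since its proof is purely formal manipulation of polynomials) identifies $\chi_j(M)$ with the coefficient of $u^j$ in $Q_M(u)$. So throughout I will work with $Q_M(u) \in K_0(\mathcal P)[u]$ and read off the $\chi_j$ at the end.

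For part (i), first I would observe that a short exact sequence of complexes, or more simply the degreewise definition, gives $P_{M\otimes N}(t) = P_M(t)\cdot P_N(t)$ in $K_0(\mathcal P)[t,t^{-1}]$, since $(M\otimes N)_k = \bigoplus_{i+i'=k} M_i\otimes N_{i'}$ and $[\,\cdot\,]$ is multiplicative for $\otimes$ in the Grothendieck ring. Hence $Q_{M\otimes N}(u) = Q_M(u)\cdot Q_N(u)$. The hypothesis that $\chi_r(M)=\chi_r(N)=0$ for $r<j$ says $Q_M(u) = u^j\bigl(\chi_j(M) + \chi_{j+1}(M)u + \cdots\bigr)$ and likewise for $N$; multiplying, $Q_{M\otimes N}(u) = u^{2j}\bigl(\chi_j(M)\chi_j(N) + (\chi_j(M)\chi_{j+1}(N)+\chi_{j+1}(M)\chi_j(N))u + \cdots\bigr)$, and extracting coefficients via Lemma \ref{lem2} yields the three displayed formulas. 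This is verbatim the proof of Theorem \ref{main1}(ii).

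For part (ii), the cone $CN$ of a self-map $f\colon N\to N$ has, in each degree, $(CN)_i = N_{i-1}\oplus N_i$ (with the usual twisted differential), so $[(CN)_i] = [N_{i-1}] + [N_i]$ and therefore $P_{CN}(t) = (1+t)\,P_N(t)$ in $K_0(\mathcal P)[t,t^{-1}]$; note the $[N]$ coefficients are insensitive to what the map $f$ actually is, exactly as the Künneth computation in Lemma \ref{lem}(i) was insensitive to $M$. Consequently $Q_{CN}(u) = u\cdot Q_N(u)$, and comparing coefficients of $u^{j+1}$ gives $\chi_{j+1}(CN) = \chi_j(N)$. (If one prefers to allow complexes not concentrated in non-negative degrees, the statement $(u-1)^nQ_{M[n]}(u) = Q_M(u)$ recorded before the theorem handles the shift, but for $M, N$ in non-negative degrees no such care is needed.) Part (iii) then follows by an immediate induction on $j$: iterating (ii), $Q_{C^j(N)}(u) = u^j\,Q_N(u)$, so $\chi_{k+j}(C^j N)$ is the coefficient of $u^{k+j}$, which is the coefficient of $u^k$ in $Q_N(u)$, namely $\chi_k(N)$; taking $k=0$ gives $\chi_j(C^j N) = \chi_0(N) = \chi(N)$, and the coefficients of $u^0,\dots,u^{j-1}$ in $u^j Q_N(u)$ vanish, giving $\chi_0(C^jN)=\cdots=\chi_{j-1}(C^jN)=0$.

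The only genuine point requiring care — the "main obstacle," though it is modest — is justifying $P_{CN}(t) = (1+t)P_N(t)$ and $P_{M\otimes N}(t)=P_M(t)P_N(t)$ at the level of the Poincaré \emph{function} in $K_0(\mathcal P)[t,t^{-1}]$ rather than after passing to homology: the higher Euler classes are defined from the terms $[M_i]$ of the complex, not from homology objects (which need not lie in $\mathcal P$), so one must work with the naive degreewise identities. These hold because $K_0(\mathcal P)$ is additive for direct sums and multiplicative for the bi-exact $\otimes$, and the cone's underlying graded object is $N[1]\oplus N$ regardless of the connecting map. Everything else is the formal polynomial algebra already carried out in Lemma \ref{lem2}.
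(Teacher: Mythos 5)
Your proposal is correct and follows essentially the same route as the paper: both prove (i) by the multiplicativity $P_{M\otimes N}(t)=P_M(t)P_N(t)$, hence $Q_{M\otimes N}(u)=Q_M(u)Q_N(u)$, and both prove (ii)--(iii) from the degreewise identity $(CN)_i = N_i\oplus N_{i-1}$ giving $P_{CN}(t)=(1+t)P_N(t)$ and $Q_{CN}(u)=uQ_N(u)$. Your extra care in justifying these identities at the level of the terms $[M_i]$ (rather than homology) is a welcome elaboration of what the paper leaves implicit.
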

It is unclear if there is a direct proof of the above theorem which does not use the interpretation of $\chi_j(M)$ as the coefficients of the Taylor expansion of $P_M(t)$ at $t=-1$. 

\begin{proof} The arguments are the same as in Theorem \ref{main1}. Defining $Q_M(u), Q_N(u) \in K_0(\mathcal P)[u]$ by $Q_M(1+t) = P_M(t)$ and $Q_N(1+t) = P_N(t)$, it follows  that $Q_M(u) = \sum_j \chi_j(M) u^j$. Now argue as in the proof of Theorem \ref{main1}. This proves (i).

(ii) As $M$ is the total complex associated with $CN$, we have  $M_0 = N_0$ and, for $i >0$,  that $M_i = N_i \oplus N_{i-1}$.   Thus $P_M(t) = P_N(t) (1+t)$ which gives $Q_M(u) =u Q_N(u)$. This proves (ii). Part (iii) follows from (ii) by induction or one can observe that $Q_M(u) = u^j Q_N(u)$.  \end{proof}
The following corollary is implicit in \cite{grayson}.
\begin{corollary} Let $M$ be a complex of $\mathcal P$ concentrated in non-negative degrees. If $\chi_0(M) \neq 0$, then $M \neq CN$. If $\chi_1(M) \neq 0$, then $M$ is not the tensor product of two acyclic complexes.
\end{corollary}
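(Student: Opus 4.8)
The plan is to derive both claims directly from Theorem \ref{main2}, using the contrapositive in each case. First I would handle the statement about cones. Suppose $M = CN$ for some self-map $N \to N$. By part (ii) of Theorem \ref{main2} (with $j = 0$, noting the hypothesis ``$\chi_r(M)$ and $\chi_r(N)$ vanish for $0 \le r < j$'' is vacuous when $j = 0$), we have $\chi_{1}(M) = \chi_0(N)$; more to the point, the relation $P_M(t) = P_N(t)(1+t)$ established in the proof shows $Q_M(u) = u\,Q_N(u)$, so $Q_M(u)$ is divisible by $u$, which forces $\chi_0(M) = Q_M(0) = 0$. Contrapositively, if $\chi_0(M) \neq 0$ then $M$ cannot be written as $CN$. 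I would state this as the one-line argument it is.

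For the second claim, suppose $M = A \otimes B$ where $A$ and $B$ are both acyclic complexes in $\mathcal P$ concentrated in non-negative degrees. Acyclicity gives $\chi_0(A) = \chi(A) = 0$ and likewise $\chi_0(B) = 0$ — this is the observation that the alternating sum of the $[A_i]$ equals the alternating sum of the classes of the homology objects, which vanish. Hence the hypothesis of Theorem \ref{main2}(i) holds with $j = 1$: $\chi_r(A)$ and $\chi_r(B)$ vanish for $0 \le r < 1$. The conclusion then gives $\chi_k(M) = 0$ for $0 \le k < 2$, in particular $\chi_1(M) = 0$. Contrapositively, $\chi_1(M) \neq 0$ implies $M$ is not such a tensor product.

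The only point that needs a word of care is why acyclicity of a bounded complex $A$ over an exact category $\mathcal P$ implies $\chi(A) = \sum_i (-1)^i [A_i] = 0$ in $K_0(\mathcal P)$. In an exact category this follows by breaking the acyclic complex into short exact sequences $0 \to Z_i \to A_i \to Z_{i-1} \to 0$ (where $Z_i$ is the image of $A_{i+1} \to A_i$, which exists as an admissible subobject since the complex is acyclic), so that $[A_i] = [Z_i] + [Z_{i-1}]$ in $K_0(\mathcal P)$, and the alternating sum telescopes to zero. I expect this bookkeeping — rather than any conceptual difficulty — to be the main thing to get right, and since it is entirely standard I would simply cite it or relegate it to a parenthetical remark, exactly as the excerpt already does for abelian categories in Section \ref{zeta}.
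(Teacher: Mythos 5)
Your proposal is correct and is exactly the intended argument: the paper offers no proof of this corollary (it is stated as ``implicit in Grayson''), and the deduction is precisely the contrapositive of Theorem \ref{main2}, parts (ii) and (i), as you give it. Your parenthetical care about why acyclicity forces $\chi_0=0$ in $K_0(\mathcal P)$ (the telescoping via admissible short exact sequences) is the right standard justification and matches what the paper takes for granted in \S\ref{zeta}.
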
 
\begin{remark} Theorem \ref{main2} is compatible with the intuition expressed in \cite[p. 104]{grayson}: 

\begin{quote} ``... we regard acyclic complexes as being infinitesimal in size when compared to arbitrary complexes.  ....that we regard doubly acyclic complexes as being doubly infinitesimal in size when compared to arbitrary complexes. It also suggests that we regard the Adams operation $\psi^k$ as being the differential of the functor $N \mapsto S^kN$ from the category of finitely generated projective modules to itself; ...''
\end{quote} 

Namely, for an acyclic complex $C$, one has $\chi_0(C) =0$ but not always $\chi_1(C) = 0$; for the tensor product $C \times D$ of acyclic complexes, one has $\chi_0(C\otimes D) = 0 = \chi_1(C\otimes D)$ but not always $\chi_2(C \otimes D) =0$. So an acyclic complex $C$ is like an infinitesimal $\epsilon$ and the tensor product $C\otimes D$ of acyclic complexes is like $\epsilon^2$, an infinitesimal of second order. The above text also suggests that for any functor $F: \mathcal P \to \mathcal P$, we regard $\chi_1(F(CN))$ as the differential of $F$  and that $\chi_j(F(-))$ as a higher differential of $F$ (when evaluated on acyclic complexes or their tensor products).  Thus, the vanishing of $\chi_0(M), \chi_1(M), \cdots \chi_n(M)$ means $M$ is like an infinitesimal $\epsilon ^n$ of order $n$. 

One possible answer to {\bf Q2} is as follows: the higher Euler characteristics $\chi_j$ provide non-trivial invariants of acyclic complexes. One has a non-trivial filtration $\tau_{\bullet}$ on the acyclic complexes in $\mathcal P$  defined for $n \ge 0$ by $\tau_n =$ the set of complexes $M$ with $\chi_0(M) =0, \chi_1(M)=0, \cdots \chi_n(M) = 0$  (order of $\epsilon^n$ or smaller).  
 \qed \end{remark} 
 
 \subsection*{Homological Poincar\'e polynomials.}  Let  $\mathcal D = D^b(\mathcal P)$  be the bounded derived category of $\mathcal P$ (now assumed to be abelian). The definition of higher Euler characteristics for $\mathcal P$ does not extend directly to the category $\mathcal D$; though Poincar\'e polynomials respect short exact sequence of complexes, they do not respect quasi-isomorphisms: 
  
  \begin{itemize} 
\item   For any short exact sequence of complexes $$0\to A \to B \to C\to 0$$ in $\mathcal P$, one has $P_B(t) = P_A(t) + P_C(t)$ and so $$\chi_j(B) = \chi_j(A) + \chi_j(C).$$
\item  Suppose that the complexes $A$ and $B$ are quasi-isomorphic.  One cannot conclude that $P_A(t) = P_B(t)$. For instance, the complex $A = N \xrightarrow{{\rm id}_N} N$ is quasi-isomorphic to the trivial complex $B$; then $P_A(t) =[N] + [N]t$ could be non-zero whereas $P_B(t)$ is always zero.  
\end{itemize} 
 
One may instead consider the {\em homological} Poincar\'e function of a bounded complex $M$ defined by
 $$P^h_M(t) = \sum_i [H^i(M)]t^i \in K_0(\mathcal P)[t, t^{-1}].$$ 
 This respects quasi-isomorphisms but not short exact sequences: 
 \begin{itemize} 
 \item  If $A$ and $B$ are quasi-isomorphic, then $P^h_A(t) = P^h_B(t)$. 
 \item If $0 \to A \to B \to C \to 0$ is a short exact sequence of complexes, then the identity $P^h_B(t) = P^h_A(t) + P^h_C(t)$ may fail to hold in general. 
 \end{itemize} 
 One can define the homological higher Euler characteristics $\chi_j^h(M)$ of $M$ as the coefficients of the Taylor expansion of $P^h_M(t)$ about $t=-1$. If the homology $h^{\bullet}(M)$ of $M$ is concentrated in non-negative degrees, one has $$\chi^h_j(M) = \sum_i (-1)^{i-j} \binom{i}{j} [h^i(M)] \in K_0(\mathcal P), \qquad j\ge 0.$$
 Theorems \ref{main1} and \ref{main2} remain valid with $\chi_j$ replaced with $\chi^h_j$. 
 
If $0 \to A \to B \to C \to 0$ is a short exact sequence of {\em acyclic} complexes,  one has $$\chi^h_1(B) = \chi_1^h(A) + \chi^h_1(C).$$ (This identity may not hold, if the acyclicity assumption is dropped.)  
  
While $\chi_0(M) = \chi^h_0(M)$ (Euler's identity), the identity $\chi_j(M) = \chi^h_j(M)$ for $j >1$ does not hold in general. This is because $P_M(t) \neq P_M^h(t)$ in general; for instance, if $A = N \xrightarrow{{\rm id}_N} N$, then $P^h_A(t) =0$ but $P_A(t)$ could be non-zero. 

Thus, it is unclear if there is a good definition of higher Euler characteristics on $\mathcal D$.
 
 \begin{remark}\label{grading}  Suppose that the category $\mathcal P$ has a $\mathbb Z$-grading; an important example is the conjectural category of motives over a given field (the theory of weights give the $\mathbb Z$-grading).
 
    For any object $M = \oplus_i M_i$, we write $M_i$ for its component of weight $i \in \mathbb Z$. The Poincar\'e function $P_M(t) \in K_0\mathcal P[t,t^{-1}]$ of $M$ is defined 
 as $P_M(t) = \sum_i [M_i] t^i$. We can define the higher Euler characteristics $\chi_j(M)$ as the coefficients of the Taylor expansion of $P_M(t)$ about $t=-1$, i.e., they are defined by the identity  $P_M(t) = \sum_j \chi_j(M) (1+t)^j$.
 \qed
 \end{remark}

\section{Final remarks}
 \subsection*{Motivic variants}  Higher Euler characteristics can be defined in a motivic context.  
 
 (i) \footnote{Motivic conjectures predict analogous results over arbitrary fields.} For any subfield $k\hookrightarrow \mathbb C$, one has the category $ \mathcal M^{AH}_k$ of absolute Hodge motives \cite[p.~5]{rollin} which has a natural $\mathbb Z$-grading coming from weights.  Any smooth proper variety $X$ over $k$ defines an object $h(X)$ of $\mathcal M^{AH}_k$; using the weight decomposition of $h(X)$, one gets, as in Remark \ref{grading}, invariants $P_{h(X)}(t)$ and $\chi_j(h(X))$; these are the motivic Poincar\'e polynomial of $X$ and the motivic higher Euler characteristics $\chi_j^{mot}(X)\in K_0 \mathcal M^{AH}_k$ of $X$.  The Betti realization gives a homomorphism 
  $r: K_0 \mathcal M^{AH}_k[t, t^{-1}] \to \mathbb Z[t, t^{-1}]$ of graded rings;  the element $r(h(X))$ is the usual Poincar\'e polynomial of the topological space $X(\mathbb C)$. Thus the motivic higher Euler characteristics of $X$ refine those of the topological space $X(\mathbb C)$.

 (ii) {\em Motivic measures} \cite{exp, wittzeta}:     Consider the category $\mathrm{Var}_F$ of varieties (integral separated schemes of finite type) over a field $F$. The Grothendieck ring $K_0 \mathrm{Var}_F$ of varieties over $F$ is defined as the quotient of the free abelian group on the set
 of isomorphism classes $[X]$ of varieties by the relations $[X] = [Y] + [X\backslash Y]$ where $Y$ is a closed subvariety of $X$. The multiplication is induced by the product of varieties. When $F$ is of positive characteristic, one needs also to impose the relation $[X]=[Y]$ for every surjective radicial morphism $X \to Y$. A {\em motivic measure} $\mu$ is a ring homomorphism $$\mu: K_0\mathrm{Var}_F \to R$$ to a ring $R$. The Euler characteristic $\chi_c$ with compact support is the prototypical motivic measure: $\chi_c: K_0\mathrm{Var}_{\mathbb C} \to \mathbb Z$ is a ring homomorphism. Another motivic measure is the Poincar\'e characteristic $\mu_P:  K_0\mathrm{Var}_{\mathbb C} \to \mathbb Z[t]$, determined by the following property: for any smooth proper variety $X$, one has $\mu_P(X) \in \mathbb Z[t]$ is the Poincar\'e polynomial of the topological space $X(\mathbb C)$. 
 
 As motivic measures are refined Euler characteristics, it is natural that certain motivic measures lead to refined higher Euler characteristics. 
 
 The usual higher Euler characteristics arise as the coefficients of the Taylor expansion about $t =-1$ of  the Poincar\'e characteristic $\mu_P: K_0\mathrm{Var}_{\mathbb C} \to \mathbb Z[t]$. This can be generalized as follows. Given a motivic measure  $\mu: K_0\mathrm{Var}_F \to A[t]$ with values in the polynomial ring over a ring $A$, we can define the higher motivic measures $\mu_j(X)$ as the coefficient of $t^j$ in the element $\mu(X) \in A[t]$. Namely, the following identity holds in $A[t]$: $$\mu(X) = \sum_j \mu_j(X) t^j.$$ While $X \mapsto \mu_0(X)$ gives the motivic measure
 $$K_0\mathrm{Var}_F \xrightarrow{\mu} A[t] \underset{t \mapsto 0}{\to} A,$$ the higher motivic measures $X \mapsto \mu_j(X)$ are just additive maps $\mu_j: K_0\mathrm{Var}_F \to A$. 
 
 More generally, one can look at the coefficients $\chi_j(X)$ of the expansion of  $\mu(X)$ at $t = a$ for any element $a$ of $A$, i.e., $$\mu(X) = \sum_j \chi_j(X) (t-a)^j \in A[t].$$ The constant term $\chi_0$ would be a motivic measure whereas the other coefficients (higher motivic characteristics) would give additive maps $\chi_j: K_0\mathrm{Var}_F \to A$. Then $\mu$ and $\chi_0$ generalize  the Poincar\'e characteristic $\mu_P$ and the Euler characteristic (with compact support) $\chi_c$ (obtained with $t =-1$).  
    
 Let us indicate another important example. The assignment $$X \mapsto \mathrm{H}_X(u,v):= \sum_{p,q\geq 0} h^{p,q}(X)u^pv^q$$ for smooth projective $X$ gives rise to the Hodge characteristic measure $$\mu_{\mathrm{H}}:K_0\mathrm{Var}_{\mathbb C} \to \mathbb Z[u,v].$$ As $\mathbb Z[u,v] = \mathbb Z[u][v]$, we take $A = \mathbb Z[u]$ and $t = v$. Let $a = u$.
 The higher motivic measures $\chi_j^{\mathrm{H}}(X)\in A$ defined by the identity  $$\mu_{\mathrm{H}}(X) = \sum_j\chi^{\mathrm{H}}_j(X) (v-u)^j$$seem to be new in the literature.   The motivic measure  $\chi_0^{\mathrm{H}}$ is the Poincar\'e characteristic: for any smooth proper variety $X$, one has $\chi_0^{\mathrm{H}}(X) = \mu_P(X)$.
 This follows from the observation that $\mathrm{H}_X(u,u) = \mu_P(X)$ (consequence of Hodge theory). 
% viewing $\mathbb Z[u,v] = \mathbb Z[u][v-u]$ provides  So we are dealing with the pair $(\mu_{\mathrm{H}}, \mu_P)$, related by the homomorphism $\mathbb Z[u,v] \to \mathbb Z[u]$ sending $u \mapsto u, v\mapsto u$. In this context, our higher Euler characteristics are   the coefficients $ \chi^{\mathrm{H}}_j(X)\in \mathbb Z[u]$. Note that $\chi^{\mathrm{H}}_0(X)\in \mathbb Z[u]$ is $\mu_P(X)$. 
 
 (iii) For any variety $X$ over $F$, its class $[X]$ in  $K_0\mathrm{Var}_F$ is the universal Euler characteristic (with compact support) of $X$. This motivic measure corresponds to the identity map on $K_0\mathrm{Var}_F$. Since the ring $K_0\mathrm{Var}_F$ has neither a natural grading nor a natural isomorphism with a polynomial ring, the above discussion does not  provide a definition of the universal Poincar\'e polynomial or the related universal higher Euler characteristics (as elements of $K_0\mathrm{Var}_F$). 
 
 The most natural candidate for a "universal" higher Euler characteristic with compact support is provided by the theory of Chow motives \cite{exp}, as follows: Consider the Grothendieck ring   $K_0(\mathrm{Chow}(F))$ of the rigid symmetric monoidal category ${\mathrm{Chow}}(F)$ of Chow motives over $F$ (with $\mathbb Q$-coefficients). Any smooth proper variety $X$ over $F$ defines an object $h(X)\in \mathrm{Chow}(F)$. The existence of a Chow-K\"unneth decomposition $$h(X) = \oplus_i h^i(X) \in\mathrm{Chow}(F), $$ permits the  definition of the motivic Chow-Poincar\'e polynomial $\mathcal P(X)$: $$\mathcal P(X) = \sum_i [h^i(X)] t^i \in K_0(\mathrm{Chow}(F)).$$
 The higher Chow-Euler characteristics of $X$ are the coefficients $$\chi_j^{Chow}(X) = \sum_i (-)^{i-j} ~\binom{i}{j}~[h^i(X)] \in K_0(\mathrm{Chow}(F))$$ of the expansion $$\mathcal P(X) = \sum_j \chi_j^{Chow}(X) (t+1)^j$$ of $\mathcal P(X)$ about $t=-1$. When $F$ is a subfield of $\mathbb C$, these refine the invariants defined above via absolute Hodge motives.

  \begin{remark} Suppose $\mathcal P$ is a $\mathbb Z$-graded neutral $\mathbb Q$-linear Tannakian category. For any object $M$ of $\mathcal P$, the higher 
  Euler characteristics $\chi_j({\rm Sym}^nM)\in K_0\mathcal P$ are determined by the Poincar\'e polynomial $P_M(t)$: this follows from the motivic Macdonald formula proved by  S. del Ba\~no \cite{rollin} \cite[\S 2.6]{bourqui}. From this motivic Macdonald formula, one deduces a motivic generalization of Proposition \ref{mcd}. \end{remark}

\subsection*{Finite categories}  C.~Berger and T.~Leinster \cite{berger, leinster} have provided and studied various definitions of the Euler characteristic of a finite 
category $\mathcal C$. The series Euler characteristic \cite[2.3]{berger} $\chi(\mathcal C)$ is defined to be value at $t=-1$ of a formal power series $f_{\mathcal C}(t) \in \mathbb Q(t)$. Define 
$g(u) \in \mathbb Q(u)$ by $g_{\mathcal C}(t+1) = f_{\mathcal C}(t)$; so $g(0) = \chi(\mathcal C)$. Then the higher Euler characteristics $\chi_j(C)$ of $\mathcal C$ are the coefficients of the Taylor expansion of $g(u)$ about $u=0$ (corresponding to $f_{\mathcal C}(t)$ about $t=-1$):
$$g(u) =\sum_j \chi_j(\mathcal C)u^j.$$Since $g$ could have a pole at $u=0$, this even gives a definition of lower Euler characteristics!

We end this paper with the 
\begin{question}  
\begin{enumerate} 
\item Given a ring homomorphism $f: A\to B$ between two commutative rings, consider the ideal $J$ of $K_0(A)$ defined as $$J = {\rm Ker}(f_*: K_0(A) \to K_0(B)).$$ Given a bounded complex $X$ of finitely generated projective $A$-modules whose class lies in $J$, how to determine the integer $r$ such that the class of $X$ is in $J^r -J^{r+1}$?

\item Is there an analogue of Theorem \ref{main1} for higher analytic torsion \cite{anton}? Is the analytic Poincare polynomial \eqref{torsionP}  of a product $M \times N$ determined by that of $M$ and $N$?

\item Is there an analogue of our results in the context of Kapranov's $N$-complexes \cite{kapranov}?
\end{enumerate}
 \end{question}  

\subsection*{Acknowledgements.}  {I heartily thank C.~Deninger, M.~Flach, J.~Huang, S.~Lichtenbaum, J.~Rosenberg, and L.~Washington for useful conversations and inspiration. When I communicated my ideas on the higher Euler characteristics to Deninger, he  alerted me to a beautiful paper \cite{anton} of A. Deitmar. Some of the ideas of \S 1 can also be found in \cite{anton}.  While our motivations are similar, the approach here is closer to \cite[p~104]{grayson}.  I am very grateful to the referee  for her/his very careful reading; her/his detailed comments and criticism led to this improved version of the paper. }

\begin{flushright}
Island where all becomes clear.

Solid ground beneath your feet.

The only roads are those that offer access.

Bushes bend beneath the weight of proofs.

The Tree of Valid Supposition grows here

with branches disentangled since time immemorial.

The Tree of Understanding, dazzlingly straight and simple,

sprouts by the spring called Now I Get It.

The thicker the woods, the vaster the vista:

the Valley of Obviously.

If any doubts arise, the wind dispels them instantly.

Echoes stir unsummoned

and eagerly explain all the secrets of the worlds.

On the right a cave where Meaning lies.

On the left the Lake of Deep Conviction.

Truth breaks from the bottom and bobs to the surface.

Unshakable Confidence towers over the valley.

Its peak offers an excellent view of the Essence of Things.

For all its charms, the island is uninhabited,

and the faint footprints scattered on its beaches

turn without exception to the sea.

As if all you can do here is leave

and plunge, never to return, into the depths.

Into unfathomable life.

- W.  Szymborska, {\it Utopia} (A large number, 1976)
\end{flushright}
\def\cprime{$'$} \def\cprime{$'$}


\begin{thebibliography}{10}

\bibitem{berger} C.~Berger and T.~Leinster.
\newblock The {E}uler characteristic of a category as the sum of a divergent series.
\newblock {\em Homology, Homotopy and Applications}, 10(1): 41--51, 2008.
\bibitem{bismutz}
Jean-Michel Bismut and Weiping Zhang.
\newblock An extension of a theorem by {C}heeger and {M}\"uller.
\newblock {\em Ast\'erisque}, (205):235, 1992.
\newblock With an appendix by Fran{\c{c}}ois Laudenbach.

\bibitem{bloch}
S.~Bloch.
\newblock A note on Weil-etale topology.
\newblock 2005.
\newblock Email to T. Chinburg and S. Lichtenbaum.

\bibitem{bourqui}
D.~Bourqui,
\newblock Fonctions {$L$} d'{A}rtin et nombre de {T}amagawa motiviques.
\newblock {\em New York J. Math.}, 16:179--233, 2010.

%\bibitem{bott-tu}
%Raoul Bott and Loring~W. Tu.
%\newblock {\em Differential forms in algebraic topology}, volume~82 of {\em
 % Graduate Texts in Mathematics}.
%\newblock Springer-Verlag, New York-Berlin, 1982.

\bibitem{bunkeo}
Ulrich Bunke and Martin Olbrich.
\newblock Group cohomology and the singularities of the {S}elberg zeta function
  associated to a {K}leinian group.
\newblock {\em Ann. of Math. (2)}, 149(2):627--689, 1999.

\bibitem{anton}
Anton Deitmar.
\newblock Higher torsion zeta functions.
\newblock {\em Adv. Math.}, 110(1):109--128, 1995.

\bibitem{rollin} S.~del Ba\~no Rollin.
\newblock The motive of some moduli spaces of vector bundles over a curve.
\newblock Preprint 1995
\newblock Available at \href{http://arxiv.org/abs/alg-geom/9501013}{Arxiv}.

\bibitem{euler}
L.~Euler.
\newblock {S}olutio problematis ad geometriam situs pertinentis.
\newblock {\em Commentarii academiae scientiarum Petropolitanae}, 8:128--140,
  1741.
\newblock presented to the St. Petersburg Academy on August 26, 1735.
\newblock Available at \href{http://www.math.dartmouth.edu/~euler/}{the Euler Archive}.

\bibitem{euler3}
L.~Euler.
\newblock Elementa doctrinae solidorum.
\newblock {\em {N}ovi {C}ommentarii academiae scientiarum {P}etropolitanae},
  4:109--140, 1752-3.
\newblock published 1758, Opera Omnia (1) Volume 26, 71-93.
\newblock \href{http://math.dartmouth.edu/~euler/docs/originals/E230.pdf} {Link}.

\bibitem{euler2}
L.~Euler.
\newblock {D}emonstratio nonnullarum insignium proprietatum quibas solida
  hedris planis inclusa sunt praedita.
\newblock {\em {N}ovi Commentarii academiae scientiarum Petropolitanae}, 4:140
  --160, 1758.
\newblock Opera Omnia (1) Volume 26, 94-108.
\newblock  \href{http://math.dartmouth.edu/~euler/docs/originals/E231.pdf}{Link}.

\bibitem{farber}
Michael~S. Farber.
\newblock Singularities of the analytic torsion.
\newblock {\em J. Differential Geom.}, 41(3):528--572, 1995.

\bibitem{fried}
David Fried.
\newblock Analytic torsion and closed geodesics on hyperbolic manifolds.
\newblock {\em Invent. Math.}, 84(3):523--540, 1986.

\bibitem{MR2394437}
I.~M. Gelfand, M.~M. Kapranov, and A.~V. Zelevinsky.
\newblock {\em Discriminants, resultants and multidimensional determinants}.
\newblock Modern Birkh\"auser Classics. Birkh\"auser Boston, Inc., Boston, MA,
  2008.
\newblock Reprint of the 1994 edition.

\bibitem{MR1341940}
Ross Geoghegan and Andrew Nicas.
\newblock Higher {E}uler characteristics. {I}.
\newblock {\em Enseign. Math. (2)}, 41(1-2):3--62, 1995.


%\bibitem{goodwillie}
%Thomas~G. Goodwillie.
%\newblock Calculus. {II}. {A}nalytic functors.
%\newblock {\em $K$-Theory}, 5(4):295--332, 1991/92.

\bibitem{grayson}
Daniel~R. Grayson.
\newblock Adams operations on higher {$K$}-theory.
\newblock {\em $K$-Theory}, 6(2):97--111, 1992.

\bibitem{kapranov}
Mikhail Kapranov
\newblock On the {$q$}-analog of homological algebra
\newblock arXiv 1996 \href{http://arxiv.org/abs/q-alg/9611005}{q-alg/9611005-link}.

\bibitem{kervaire} 
Michel Kervaire. 
\newblock Courbure int\'egrale g\'en\'eralis\'ee et homotopie.
\newblock {\em Math. Ann.}, 131:219--252, 1956.

\bibitem{leinster}
T.~Leinster.
\newblock The {E}uler characteristic of a category.
\newblock {\em Doc. Math.}, 13: 21--49, 2008.
 
\bibitem{lichtenbaum}
S.~Lichtenbaum.
\newblock The {W}eil-\'etale topology on schemes over finite fields.
\newblock {\em Compos. Math.}, 141(3):689--702, 2005.

\bibitem{macdonald}
I.G.~Macdonald.
\newblock The {P}oincar\'e polynomial of a symmetric product.
\newblock {\em Math. Proc. of the Cambridge Philosophical Society}, 58(4):563--568, 1962.

\bibitem{wittzeta} N.~Ramachandran, Zeta functions, Grothendieck groups, and the Witt ring. \newblock arXiv:1407.1813. \newblock To appear in {\em Bull. Sci. Math.}, 2015. (doi:10.1016/j.bulsci.2014.11.004).

\bibitem{exp}
N.~Ramachandran and G. Tabuada, 
\newblock Exponentiation of motivic measures.
\newblock arXiv 1412.1795.
\newblock To appear in {\em Journal of the Ramanujan Mathematical Society}, 2015. 

\bibitem{raysinger}
D.~B. Ray and I.~M. Singer.
\newblock {$R$}-torsion and the {L}aplacian on {R}iemannian manifolds.
\newblock {\em Advances in Math.}, 7:145--210, 1971.
\end{thebibliography}
\end{document}